\newtheorem{theorem}{Theorem}[section]
\newtheorem{cor}[theorem]{Corollary}
\theoremstyle{definition}
\newtheorem{definition}[theorem]{Definition}
\newtheorem{example}[theorem]{Example}
\newtheorem{proposition}[theorem]{Proposition}
\theoremstyle{remark}
\newtheorem{remark}[theorem]{Remark}
\numberwithin{equation}{section}
    \newcommand{\st}{^{\textstyle {\ast }}}
   \newcommand{\ph}{\mbox{$\varphi$}}
    \renewcommand{\phi}{\varphi}
   \newcommand{\Ht}{\mbox{$H^{2}$}}
   \newcommand{\Hi}{\mbox{$H^{\infty}$}}
   \newcommand{\D}{\mbox{$\mathbb{D}$}}
   \newcommand{\C}{\mbox{$C_{\varphi}$}}
   \newcommand{\Cs}{\mbox{$C_{\varphi}\st$}}
   \newcommand{\W}{\mbox{$W_{\psi,\varphi}$}}
   \newcommand{\Ws}{\mbox{$W_{\psi,\varphi}^{\textstyle\ast}$}}
 	\newfont{\caps}{cmcsc9}  % for Authors
 	\newfont{\jour}{cmti9}  % for Journal Titles
\title{Normaloid Weighted Composition Operators on $H^2$}
\author{Derek Thompson}%
\address{Taylor University, Upland, IN, 46989}%
\email{theycallmedt@gmail.com}%
\thanks{The author was funded by the Taylor University Distinguished Lecturer award.}
\subjclass[2010]{Primary: 47B33, 47B20;  Secondary: 47B35, 47A10}
\keywords{normaloid operator, convexoid operator, hyponormal operator, composition operator, weighted composition operator, spectraloid operator, spectral radius, numerical radius}
\begin{document}

\maketitle

\begin{abstract}
    When \ph\ is an analytic self-map of the unit disk with Denjoy-Wolff point $a \in \D$, and $\rho(\W) = \psi(a)$, we give an exact characterization for when \W\ is normaloid. We also determine the spectral radius, essential spectral radius, and essential norm for a class of non-power-compact composition operators whose symbols have Denjoy-Wolff point in \D. When the Denjoy-Wolff point is on $\partial \D$, we give sufficient conditions for several new classes of normaloid weighted composition operators.
    
\end{abstract}

\section{Introduction} \label{intro}

In this paper, we are interested in weighted composition operators on the classical \textit{Hardy space} \Ht, the Hilbert space of analytic functions $\displaystyle f(z) = \sum_{n=0}^{\infty}a_n z^n$ on the open unit disk \D\ such that $$\|f\|^{2}=\sum_{n=0}^{\infty}|a_n|^{2}<\infty.$$ A \textit{composition operator} \C\ on \Ht\ is given by $\C f = f \circ \ph$. When \ph\ is an analytic self-map of \D, the operator \C\ is necessarily bounded. A \textit{Toeplitz operator} $T_\psi$ on $H^2$ is given by $T_\psi f = P(\psi f)$ where $P$ is the projection back to $H^2$. When $\psi \in \Hi$, the space of bounded analytic functions on \D, we simply have $T_\psi f = \psi f$, since $\psi f$ is guaranteed to be in \Ht, and all such Toeplitz operators are bounded. Throughout this paper, we will assume $\psi \in \Hi$.  We write $\W = T_\psi \C$ and call such an operator a \textit{weighted composition operator}. We are interested in when such operators are \textit{normaloid}. 

For a bounded operator $T$, we have the following definitions:

\begin{itemize}
    \item $\sigma(T)$ is the spectrum of $T$.
    \item $\rho(T)$ is the spectral radius of $T$.
    \item $\rho_{e}(T)$ is the essential spectral radius of $T$.
    \item $W(T)$ is the numerical range of $T$.
    \item $\| T \|_{e}$ is the essential norm of $T$.
    \item $r(T)$ is the numerical radius of $T$.
\end{itemize}

An operator $T$ is:

\begin{enumerate}
    \item \textit{self-adjoint} if $T = T^*$.
    \item \textit{normal} if $T^*T = TT^*$.
    \item \textit{hyponormal} if $T^*T \geq TT^*$.
    \item \textit{cohyponormal} if $T^*T \leq TT^*$.
    \item \textit{normaloid} if $\| T \| = \rho(T)$.
    \item \textit{convexoid} if the closure of $W(T)$ is the convex hull of $\sigma(T)$.
    \item \textit{spectraloid} if $\rho(T) = r(T) $.
    \item \textit{power-compact} if $T^{n}$ is compact for some positive integer $n$. 
\end{enumerate}

Here is a list of well-known facts which we will use repeatedly:

\begin{enumerate}
    \item We have the following sequences of implications: 
    $$ \textrm{self-adjoint} \Rightarrow \textrm{normal} \Rightarrow \textrm{(co)hyponormal} \Rightarrow \textrm{normaloid / convexoid} \Rightarrow \textrm{spectraloid} $$
    \item if $\psi \in \Hi$, then $\| T_{\psi} \| = \| \psi \|_{\infty}$. 
    \item For a bounded operator $T$, we have 
    $$ \| T \|_{e} = \inf \{ \| T - Q \| : Q \textrm{ is compact} \}$$ $$\rho_{e}(T) = \lim_{k \rightarrow \infty} (\| T^{k} \|_{e})^{1/k}$$ $$\rho(T) = \lim_{k \rightarrow \infty} (\| T^{k} \|)^{1/k}.$$
\end{enumerate}

Throughout this paper, we will focus on weighted composition operators where $\rho(\W) = |\psi(a)| \rho(\C)$, where $a$ is the Denjoy-Wolff point of \C. This is a large class, including every power-compact weighted composition operator \cite[Theorem 4.3]{Hammond}, and many weighted composition operators whose compositional symbol converges uniformly to its Denjoy-Wolff point \cite[Corollary 10]{derek}. Due to the norm inequality $\| \W \| \leq \| T_\psi \| \| \C \| = \|\psi\|_{\infty} \| \C \|$, we will often assume $|\psi(a)|=\|\psi\|_{\infty}$. It is unclear whether this is necessary, but we do have $\| \psi \|_{2}$ as a lower bound for $|\psi(a)|$ when $\rho(\C) = 1$.

\begin{proposition}\label{psi2} Suppose \ph\ is an analytic  self-map of \D\ with Denjoy-Wolff point $a$, $\psi \in \Hi$, \W\ is normaloid, and $\rho(\W) = |\psi(a)|$. Then $\| \psi \|_{2} \leq |\psi(a)|$. \end{proposition}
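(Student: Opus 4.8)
The plan is to test the operator against a single, explicitly chosen vector, namely the constant function $1$, which is also the reproducing kernel of \Ht\ at the origin and has norm $\|1\|_{2}=1$. The key observation is that $\W$ acts very simply on constants: since $1\circ\ph=1$, we get $\W 1 = \psi\cdot(1\circ\ph) = \psi$. Because $\psi\in\Hi\subseteq\Ht$, the quantity $\|\psi\|_{2}$ is finite and equals $\|\W 1\|_{2}$.

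From here the estimate is immediate: $\|\psi\|_{2} = \|\W 1\|_{2} \le \|\W\|\,\|1\|_{2} = \|\W\|$. Now invoke the hypotheses: \W\ is normaloid, so $\|\W\| = \rho(\W)$, and by assumption $\rho(\W) = |\psi(a)|$. Chaining these gives $\|\psi\|_{2} \le |\psi(a)|$, which is the claim.

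There is no real obstacle here; the proof is a one-line operator-norm inequality once one notices that $\W$ sends the constant function $1$ to $\psi$. The only points requiring a moment's care are the elementary facts that the constant function $1$ is fixed by composition with any self-map \ph\ (so that $\W 1 = \psi$ exactly, with no compositional distortion) and that $\|1\|_{\Ht}=1$, together with the observation that membership $\psi\in\Hi$ guarantees $\psi\in\Ht$ so that $\|\psi\|_{2}$ is meaningful. Note also that this argument uses only normaloidness and the value of $\rho(\W)$; it does not require the stronger structural hypotheses on \ph\ discussed earlier in the introduction.
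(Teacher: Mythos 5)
Your argument is correct and is exactly the paper's proof: apply $\W$ to the constant function $1$ to get $\|\psi\|_{2}=\|\W 1\|_{2}\leq\|\W\|=\rho(\W)=|\psi(a)|$. Nothing further is needed.
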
    
\begin{proof} $$|\psi(a)| = \rho(\W) = \| \W \| \geq \| \W 1 \|_{2} = \|\psi \|_{2}. $$ \end{proof}

The organization of the rest of the paper is as follows. In Section 2, we consider the case when the Denjoy-Wolff point $a$ of \ph\ belongs to \D. In Section 3, we show that if $a$ belongs to $\partial \D$, the set of operators for which $\rho(\W) = |\psi(a)| \rho(\C)$ is non-trivial. For such operators, we discover new classes of normaloid weighted composition operators in Section 4. We end with further questions about normaloid weighted composition operators in Section 5.

\section{$a \in \D$}\label{intsec}

When the Denjoy-Wolff point $a$ of \ph\ is in \D, \C\ is rarely normaloid, as the next theorem shows.

\begin{theorem}\label{ph0} If the Denjoy-Wolff point of \ph\ is in \D, then \C\ is normaloid if and only if $\ph(0)=0$.\end{theorem}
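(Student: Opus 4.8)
The plan is to prove both directions by exploiting the relationship between the norm of $\C$, its spectral radius, and the location of the Denjoy-Wolff point $a \in \D$. Recall the classical facts that for $\varphi$ a self-map of $\D$ with Denjoy-Wolff point $a \in \D$, the spectral radius is $\rho(\C) = 1$ precisely when... actually, more useful here: $\rho(\C) = \varphi'(a)$ when... no — the cleanest route uses the well-known estimate of Cowen and MacCluer relating $\|\C\|$ to $\varphi(0)$. Specifically, the standard lower bound is
\[
\|\C\| \geq \left( \frac{1+|\varphi(0)|}{1-|\varphi(0)|} \right)^{1/2},
\]
with equality forcing strong constraints, while $\rho(\C) = 1$ always holds when $a \in \D$ (since $\|\C^n\| \geq 1$ as $\C^n 1 = 1$, and $\rho(\C) \le \|\C\|$; more precisely one shows $\rho(\C)=1$ using that iterates of $\varphi$ converge to $a$). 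So the crux is: $\C$ normaloid $\iff \|\C\| = \rho(\C) = 1 \iff \|\C\| \le 1$.

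First I would establish that when $a \in \D$, $\rho(\C) = 1$. This follows because $\C 1 = 1 \neq 0$ gives $\|\C^n\| \geq 1$ hence $\rho(\C) \geq 1$; for the reverse, one uses that $\varphi_n = \varphi \circ \cdots \circ \varphi \to a$ uniformly on compact sets together with the Schwarz–Pick / Julia–Carathéodory machinery, or simply cites the known spectral radius formula for composition operators with interior Denjoy-Wolff point (giving $\rho(\C)=1$). Then $\C$ is normaloid iff $\|\C\| = 1$. Next I would invoke the sharp norm bound: $\|\C\|^2 \geq \frac{1+|\varphi(0)|}{1-|\varphi(0)|}$, which is $\geq 1$ always and equals $1$ iff $\varphi(0) = 0$. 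This immediately gives the forward direction: if $\C$ is normaloid then $1 = \|\C\|^2 \geq \frac{1+|\varphi(0)|}{1-|\varphi(0)|}$ forces $\varphi(0)=0$.

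For the converse, suppose $\varphi(0) = 0$. Then by the Littlewood subordination principle (or directly, since $\varphi(0)=0$ means $\C$ fixes the constants and maps $zH^2$ appropriately), $\|\C f\|^2 \leq \|f\|^2$ for all $f \in H^2$, i.e. $\|\C\| \leq 1$; combined with $\|\C\| \geq \|\C 1\| = 1$ we get $\|\C\| = 1 = \rho(\C)$, so $\C$ is normaloid. The main obstacle — really the only delicate point — is justifying $\rho(\C) = 1$ cleanly; everything else is a direct application of Littlewood's theorem and the standard norm estimate. I expect the author handles $\rho(\C)=1$ by citing a known result (e.g. from Cowen–MacCluer) on spectral radii of composition operators whose symbol has Denjoy-Wolff point in the disk, or by the elementary iteration argument: $\rho(\C) = \lim \|\C^n\|^{1/n} = \lim \|C_{\varphi_n}\|^{1/n}$ and $\|C_{\varphi_n}\|^2 \le \frac{1+|\varphi_n(0)|}{1-|\varphi_n(0)|} \to \frac{1+|a|}{1-|a|}$, a finite constant, whose $n$-th root tends to $1$.
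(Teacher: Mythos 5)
Your approach is essentially the paper's: establish $\rho(\C)=1$ for an interior Denjoy--Wolff point, then reduce normaloidness to $\|\C\|=1$ and decide that via the Cowen--MacCluer norm bounds in terms of $\varphi(0)$, with Littlewood subordination handling the converse. One correction, though: the inequality you quote as "the standard lower bound," namely $\|\C\| \geq \left(\tfrac{1+|\varphi(0)|}{1-|\varphi(0)|}\right)^{1/2}$, is actually the \emph{upper} bound and is false as a lower bound (e.g.\ for $\varphi \equiv 1/2$ one has $\|\C\| = (4/3)^{1/2} < 3^{1/2}$). The correct lower bound is $\|\C\| \geq \left(\tfrac{1}{1-|\varphi(0)|^2}\right)^{1/2}$, which is the one the paper uses; it still exceeds $1$ whenever $\varphi(0)\neq 0$, so your forward direction goes through verbatim after substituting the right inequality, and the rest of your argument is sound.
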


\begin{proof} By \cite[Theorem 3.9]{Carlbook}, the spectral radius of \C\ is 1. By \cite[Corollary 3.7]{Carlbook}, we have $$ \left( \frac{1}{1-|\ph(0)|^2} \right)^{1/2} \leq \| \C \| \leq \left( \frac{1+|\ph(0)|}{1-|\ph(0)|} \right)^{1/2}$$ and we have $\| \C \|= 1$ if and only if $\ph(0)=0$. \end{proof}

Unsurprisingly, then, we show that if $\rho(\W) = |\psi(a)|\rho(\C)$, then \W\ is normaloid if and only if $\psi$ has a particular form. For the interior fixed point case, since $\rho(\C) = 1$, that assumption is really $\rho(\W) = |\psi(a)|$. Since this case also always has $|\psi(a)| \leq \rho(\W) \leq \| \W \|$, the next theorem focuses on when $|\psi(a)| = \| \W \|$. 

\begin{theorem}\label{interior} Suppose \ph\ is an analytic self-map of the disk with Denjoy-Wolff point $a \in \D$, and $\psi \in \Hi$. Then $|\psi(a)| = \| \W \|$ if and only if $\psi$ has the form $$\psi = \psi(a) \frac{K_{a}}{K_{a}\circ\varphi}.$$ \end{theorem}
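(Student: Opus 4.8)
The plan is to exploit the reproducing kernel $K_a$ for the Hardy space, which satisfies $\langle f, K_a\rangle = f(a)$ and $\|K_a\|^2 = K_a(a) = 1/(1-|a|^2)$. The key identity is the adjoint action of a weighted composition operator on kernels: $\W^* K_w = \overline{\psi(w)}\, K_{\varphi(w)}$, a standard computation. Applying this at the Denjoy--Wolff point $w = a$, and using that $a$ is a fixed point of $\varphi$, gives $\W^* K_a = \overline{\psi(a)}\, K_a$. Hence $K_a$ is an eigenvector of $\W^*$ with eigenvalue $\overline{\psi(a)}$, so $\|\W\| = \|\W^*\| \geq |\psi(a)|\,\|K_a\|/\|K_a\| = |\psi(a)|$, recovering the inequality mentioned before the theorem.

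For the forward direction, suppose $|\psi(a)| = \|\W\|$. Then $K_a$ is a norm-attaining vector for $\W^*$ with $\|\W^* K_a\| = |\psi(a)|\,\|K_a\| = \|\W^*\|\,\|K_a\|$. The idea is to use the polar-decomposition / extremal characterization: when $\|T^*x\| = \|T^*\|\,\|x\|$, the vector $x$ lies in the top eigenspace of $T T^*$, and this forces rigidity on $T$. Concretely, I would compute $\W\W^* K_a$ and compare. Alternatively, and more directly: $\|\W^* K_a\| = \|\W^*\|\,\|K_a\|$ means equality holds in $\|\W^* K_a\| \le \|\W^*\|\,\|K_a\|$, and combined with $\W^* K_a = \overline{\psi(a)} K_a$ we get that the operator $\W^*$ restricted to the line $\mathbb{C}K_a$ already achieves the operator norm. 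Then for \emph{every} $f \in H^2$, write $\langle \W f, K_a \rangle = \langle f, \W^* K_a\rangle = \overline{\psi(a)}\langle f, K_a\rangle = \overline{\psi(a)} f(a)$, while also $\langle \W f, K_a\rangle = (\W f)(a) = \psi(a) f(\varphi(a)) = \psi(a) f(a)$; these agree automatically, so the constraint must come from the norm equality, not a pointwise identity. The honest route is: since $\|\W\| = \|\W^* K_a\|/\|K_a\|$, the unit vector $K_a/\|K_a\|$ maximizes $\|\W^* x\|$, so $\W\W^* K_a = \|\W\|^2 K_a = |\psi(a)|^2 K_a$. Expanding $\W\W^* K_a = \overline{\psi(a)}\,\W K_a$, we obtain $\W K_a = \frac{|\psi(a)|^2}{\overline{\psi(a)}} K_a = \psi(a) K_a$, i.e. $\psi \cdot (K_a\circ\varphi) = \psi(a) K_a$, which rearranges to exactly $\psi = \psi(a) K_a/(K_a\circ\varphi)$, noting $K_a \circ \varphi$ is zero-free since $K_a$ is zero-free on $\D$.

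For the converse, suppose $\psi = \psi(a) K_a/(K_a\circ\varphi)$; one must first check $\psi \in \Hi$, which holds because $K_a \circ \varphi = 1/(1 - \overline{a}\varphi)$ is bounded below in modulus on $\D$ (since $|\overline a \varphi| \le |a| < 1$) and $K_a$ is bounded. Then $\W f = \psi \cdot (f\circ\varphi) = \psi(a)\, \frac{K_a}{K_a\circ\varphi}\,(f\circ\varphi)$, and a direct computation should show $\W = \psi(a)\, M_{K_a} C_\varphi M_{K_a}^{-1}$ in an appropriate sense, or more cleanly that $\W^*\W \le |\psi(a)|^2 I$ by testing against kernels, giving $\|\W\| \le |\psi(a)|$; combined with the reverse inequality $\|\W\| \ge |\psi(a)|$ always available from $\W^* K_a = \overline{\psi(a)} K_a$, we conclude $\|\W\| = |\psi(a)|$.

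The main obstacle is pinning down the forward direction rigorously: justifying the step from ``$K_a$ attains the norm of $\W^*$'' to ``$\W\W^* K_a = \|\W\|^2 K_a$.'' This requires the standard but slightly delicate fact that a norm-attaining vector for an operator is a maximal eigenvector of $T T^*$ (equivalently $|T^*|$), which is where the argument genuinely uses Hilbert space geometry rather than just the kernel bookkeeping; everything else is formal manipulation with reproducing kernels.
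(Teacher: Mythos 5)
Your forward direction is correct and takes a genuinely different route from the paper. The paper first settles the case $\ph(0)=0$ by noting $\|\W\|\geq\|\W 1\|=\|\psi\|_{2}$, which forces $\psi$ to be constant, and then transports the general case to that one by conjugating with the unitary $W_{\zeta,\tau}$, where $\tau=(a-z)/(1-\overline{a}z)$. You instead work directly at $a$: from $\Ws K_a=\overline{\psi(a)}K_a$ and $\|\Ws K_a\|=\|\Ws\|\,\|K_a\|$ you get $\langle(\|\W\|^{2}I-\W\Ws)K_a,K_a\rangle=0$, and since $\|\W\|^{2}I-\W\Ws\geq 0$ this forces $\W\Ws K_a=|\psi(a)|^{2}K_a$; then $\overline{\psi(a)}\,\W K_a=|\psi(a)|^{2}K_a$ yields $\psi\cdot(K_a\circ\ph)=\psi(a)K_a$, i.e.\ the stated form (and $\psi(a)=0$ forces $\W=0$, hence $\psi=\W 1=0$, trivially of that form). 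The ``delicate fact'' you flag is the standard observation that a positive operator annihilates any vector on which its quadratic form vanishes, so this half is complete and arguably cleaner than the paper's.

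The converse is where you have a genuine gap: neither of your suggested routes works as written. The factorization $\W=\psi(a)M_{K_a}C_{\ph}M_{K_a}^{-1}$ is correct but is only a similarity, which does not preserve norms; the best it gives is $\|\W\|\leq|\psi(a)|\,\|M_{K_a}\|\,\|C_{\ph}\|\,\|M_{1/K_a}\|$, and since $\|C_{\ph}\|>1$ whenever $\ph(0)\neq 0$, this exceeds $|\psi(a)|$. As for ``testing $\Ws\W\leq|\psi(a)|^{2}I$ against kernels'': what you can verify on kernels is the diagonal inequality $\|\Ws K_w\|\leq|\psi(a)|\,\|K_w\|$ for every $w$ (it follows from the given form of $\psi$ together with Schwarz--Pick), but a quadratic form is not determined by its values on a total set, so a norm bound on the family $\{K_w\}$ does not bound the operator norm; establishing the operator inequality this way would require positivity of the full two-variable kernel $(v,w)\mapsto\langle(|\psi(a)|^{2}I-\W\Ws)K_w,K_v\rangle$, which you have not addressed. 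The paper closes this direction by making the conjugation \emph{unitary}: with $\zeta=\sqrt{1-|a|^{2}}\,K_a$ and $\tau$ as above, $W_{\zeta,\tau}$ is a self-inverse unitary and $W_{\zeta,\tau}\W W_{\zeta,\tau}=\psi(a)C_{g}$ with $g=\tau\circ\ph\circ\tau$ fixing $0$, so $\|C_{g}\|=1$ and $\|\W\|=|\psi(a)|$. You need this unitary equivalence (or some equally genuine norm computation) to finish the converse.
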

\begin{proof}

First, assume the two values are equal. Suppose $\ph(0)=0$ so that $\| \W \| = |\psi(0)|$. However, $$|\psi(0)| = \| \W \| \geq \| \W 1 \|= \| \psi \| = \sqrt{\left|\psi(0)\right|^{2} + \left|\psi'(0)\right|^2+\left|\frac{\psi''(0)}{2}\right|^2 + \dots}$$ and we only have equality if every derivative of $\psi$ at $0$ is $0$, making $\psi$ constant (equal to $\psi(0)$), which trivially fits the required form, since $K_0 = 1$. 

Now, suppose $\ph(a) = a$, for some $a \in \D$ other than $0$. The weighted composition operator $W_{\zeta, \tau}$ where $\zeta = \sqrt{1-|a|^2} \frac{1}{1-\overline{a} z}, \tau = \frac{a- z}{1 - \overline{a} z}$ is unitary by \cite[Theorem 6]{Bourdon}. Note that $\tau$ switches $a$ and $0$ and is an involution and $W_{\zeta, \tau}$ is its own inverse. Therefore $W_{\zeta, \tau} \W W_{\zeta, \tau}$ is unitarily equivalent to \W, and it is again a weighted composition operator $W_{f,g}$, where $f = (\zeta) (\psi \circ \tau )(\zeta \circ \varphi \circ \tau)$ and $g = \tau \circ \varphi \circ \tau$. Since $g(0) = 0$, by the same logic as above, $f$ is a constant function, and the constant is $f(0) = \psi(a)$. Therefore, we have 

$$\psi \circ \tau = \frac{\psi(a)}{(\zeta) (\zeta \circ \varphi \circ \tau)}$$ 
and now composing both sides with $\tau$, and recalling $\tau \circ \tau = z$, we have

\begin{align}
\psi &= \frac{\psi(a)}{(\zeta \circ \tau)( \zeta \circ \varphi)}\label{eqn2} \\
    &=  \psi(a) \left(\frac{1}{1-\overline{a}z}\right) (1 - \overline{a}\varphi) \nonumber \\
    &= \psi(a) \frac{K_{a}}{K_{a}\circ\varphi}\label{eqn}.
\end{align}

For the other direction, suppose $\psi$ has the form given in equation (\ref{eqn}), and we will show that $|\psi(a)| = \| \W \|$. By the same logic as the other direction, \W\ is unitarily equivalent to $W_{\zeta, \tau} \W W_{\zeta, \tau}$, and again, this is a weighted composition operator of the form $W_{f,g}$, $f = (\zeta) (\psi \circ \tau )(\zeta \circ \varphi \circ \tau), g = \tau \circ \varphi \circ \tau$. Using the form for $\psi$ from Equation (\ref{eqn2}), we see that 

$$f = (\zeta) \left(\frac{\psi(a)}{(\zeta \circ \tau \circ \tau)(\zeta \circ \varphi \circ \tau)}\right)(\zeta \circ \varphi \circ \tau) = \psi(a).$$

Then $W_{f,g} = \psi(a)C_{g}$, and since $g(0)=0$, $\|C_{g} \| = 1$, so we have $\| \W \| = \| W_{f,g} \| = \| \psi(a) C_{g} \| = |\psi(a)| \| C_g \| = |\psi(a)|$. 
\end{proof}

From this, we have an immediate corollary about when \W\ is normaloid.

\begin{cor}\label{result} Suppose \ph\ is an analytic self-map of the disk with Denjoy-Wolff point $a \in \D$, $\psi \in \Hi$, and $\rho(\W) = |\psi(a)|$. Then \W\ is normaloid if and only if $\psi$ has the form $$\psi = \psi(a) \frac{K_{a}}{K_{a}\circ\varphi}.$$
\end{cor}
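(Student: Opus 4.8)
The plan is to derive Corollary~\ref{result} directly from Theorem~\ref{interior} by combining it with the hypothesis $\rho(\W) = |\psi(a)|$. Recall that normaloid means $\|\W\| = \rho(\W)$. So under the standing assumption $\rho(\W) = |\psi(a)|$, the operator \W\ is normaloid \emph{if and only if} $\|\W\| = |\psi(a)|$, i.e.\ exactly the condition characterized by Theorem~\ref{interior}.

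Here are the steps in order. First, I would observe that since $a$ is the Denjoy-Wolff point of \ph\ and $a \in \D$, we always have $\rho(\C) = 1$ (by \cite[Theorem 3.9]{Carlbook}, as used in Theorem~\ref{ph0}), so the hypothesis $\rho(\W) = |\psi(a)|\rho(\C)$ of the general setup reduces to $\rho(\W) = |\psi(a)|$ here — this is already noted in the text preceding Theorem~\ref{interior}, so I can just invoke it. Second, I would record the chain of inequalities $|\psi(a)| = \rho(\W) \leq \|\W\|$; the left equality is the hypothesis and the inequality is the general fact that spectral radius never exceeds the norm. Third, I would note that \W\ is normaloid precisely when this inequality is an equality, i.e.\ when $\|\W\| = |\psi(a)|$. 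Fourth, I would apply Theorem~\ref{interior}, which says $\|\W\| = |\psi(a)|$ holds if and only if $\psi = \psi(a)\, K_a/(K_a\circ\varphi)$. Chaining these equivalences gives the claim.

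Honestly, there is no real obstacle here: the corollary is a formal consequence of Theorem~\ref{interior} together with the definition of normaloid and the hypothesis pinning $\rho(\W)$ to $|\psi(a)|$. The only thing one must be slightly careful about is making sure the hypothesis $\rho(\W) = |\psi(a)|$ (rather than the more general $\rho(\W) = |\psi(a)|\rho(\C)$) is the correct specialization in the interior case, which is handled by $\rho(\C)=1$. The proof is therefore just two or three lines: substitute $\rho(\W) = |\psi(a)|$ into the definition of normaloid, observe this makes ``normaloid'' equivalent to ``$\|\W\| = |\psi(a)|$,'' and quote Theorem~\ref{interior}.
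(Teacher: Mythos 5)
Your proposal is correct and is essentially the paper's own argument: both rest on the chain $|\psi(a)| = \rho(\W) \leq \|\W\|$, observe that under the hypothesis ``normaloid'' is equivalent to $\|\W\| = |\psi(a)|$, and then quote Theorem~\ref{interior} for both directions. No gaps.
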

\begin{proof}
Note that we have $|\psi(a)| \leq \rho(\W) \leq \| \W \|$ since $a \in \D$. Then if \W\ is normaloid, we have $\| \W \| = \rho(\W) = |\psi(a)|$, where the second equality is by hypothesis. If instead we assume $\psi$ has the given form, by Theorem \ref{interior}, we have $|\psi(a)| = \| \W \|$, therefore $|\psi(a)| = \rho(\W) = \| \W \|$, so \W\ is normaloid.
\end{proof}

In the previous corollary, we assumed that $\rho(\W) = |\psi(a)| = |\psi(a)|\rho(\C)$. The next corollary shows this case includes all power-compact weighted composition operators with $\psi \in \Hi$, and Section 3 shows it includes several different classes of \W\ with Denjoy-Wolff point of \ph\ on $\partial \D$.  

\begin{cor} Suppose \ph\ is an analytic self-map of \D\ with Denjoy-Wolff point $a \in \D$, $\psi \in \Hi$, and \W\ is power-compact. Then \W\ is normaloid if and only if $\psi$ has the form $$\psi = \psi(a) \frac{K_{a}}{K_{a}\circ\varphi}.$$ \end{cor}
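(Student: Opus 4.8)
The plan is to deduce this corollary from Corollary \ref{result} by showing that a power-compact weighted composition operator automatically satisfies the hypothesis $\rho(\W) = |\psi(a)|$. The excerpt already supplies the key external input: by \cite[Theorem 4.3]{Hammond}, every power-compact \W\ has $\rho(\W) = |\psi(a)|\rho(\C)$. Since the Denjoy-Wolff point $a$ of \ph\ lies in \D, the spectral radius of \C\ is $1$ (this is \cite[Theorem 3.9]{Carlbook}, already invoked in the proof of Theorem \ref{ph0}), so $\rho(\W) = |\psi(a)| \cdot 1 = |\psi(a)|$.

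Once that identity is in hand, the statement is exactly the conclusion of Corollary \ref{result}: under the standing assumptions that \ph\ is an analytic self-map of \D\ with Denjoy-Wolff point $a \in \D$, $\psi \in \Hi$, and $\rho(\W) = |\psi(a)|$, the operator \W\ is normaloid if and only if $\psi = \psi(a) \frac{K_a}{K_a \circ \varphi}$. So the proof is essentially a one-line citation chain: power-compact $\Rightarrow$ $\rho(\W) = |\psi(a)|\rho(\C)$ $\Rightarrow$ (using $\rho(\C)=1$) $\rho(\W) = |\psi(a)|$ $\Rightarrow$ apply Corollary \ref{result}.

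There is essentially no obstacle here; the only thing to double-check is that the hypotheses of \cite[Theorem 4.3]{Hammond} are met by an arbitrary power-compact \W\ with $\psi \in \Hi$ (in particular that no extra regularity on $\psi$ or \ph\ is needed), and that the Denjoy-Wolff point being in \D\ is consistent with power-compactness — which it is, since a composition operator with Denjoy-Wolff point on $\partial\D$ cannot have any compact power, so the interior fixed point case is exactly where power-compactness is available. I would write the proof as: ``By \cite[Theorem 4.3]{Hammond}, $\rho(\W) = |\psi(a)|\rho(\C)$. Since $a \in \D$, $\rho(\C) = 1$ by \cite[Theorem 3.9]{Carlbook}, so $\rho(\W) = |\psi(a)|$. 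The conclusion now follows from Corollary \ref{result}.''
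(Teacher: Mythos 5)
Your proposal is correct and follows the same overall strategy as the paper: establish $\rho(\W) = |\psi(a)|$ and then invoke the characterization of Theorem \ref{interior} / Corollary \ref{result}. The one place where you diverge is in how the identity $\rho(\W) = |\psi(a)|$ is justified for the full power-compact class. You lean entirely on the Hammond citation (as the paper's introduction does), whereas the paper's own proof is more cautious: it cites Hammond only for the \emph{compact} case and then handles the genuinely power-compact case by a separate spectral argument --- for a power-compact operator the nonzero spectrum consists of eigenvalues, so the spectral radius is the modulus of the largest eigenvalue, and since $\Ws K_a = \overline{\psi(a)}K_a$ together with Hammond's description of the possible eigenvalues $\{0, \psi(a), \psi(a)\ph'(a), \psi(a)(\ph'(a))^2, \dots\}$, the largest is $|\psi(a)|$. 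You yourself flag the need to check that Hammond's hypotheses cover arbitrary power-compact $\W$; the paper's extra argument is precisely the insurance against that gap, and it costs only one more line. Either route is acceptable, but if you rely solely on the citation you should verify that the cited result is stated for power-compact (not merely compact) operators; otherwise include the eigenvalue argument.
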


\begin{proof} By \cite[Proposition 4.3]{Hammond}, if \W\ is compact, then $\rho(\W) = |\psi(a)|$, so Theorem \ref{interior} applies. If \W\ is power-compact, we still have that the spectral radius is the absolute value of its largest eigenvalue, and $\Ws (K_{a}) = \overline{\psi(a)}K_{a}$, so $\rho(\W) = |\psi(a)|$.  \end{proof}

This form for $\psi$ is not unexpected, since the same form is required for \W\ to be normal when the fixed point of \ph\ belongs to \D\ \cite[Proposition 8]{Bourdon}, or even for \W\ to be cohyponormal (in \cite{coko}, they are shown to be equivalent when $a \in \D)$). However, while normality requires a much stricter characterization for \ph, here we show that this form for $\psi$ is sufficient for \W\ to be normaloid, while allowing for many different forms for \ph. 

At the end of \cite{derek}, the authors ask how often we have $\sigma(\W) = \psi(a)\sigma(\C)$. The work above shows that there are weighted composition operators with \ph\ having Denjoy-Wolff point $a$ where this is false.

\begin{example}\label{2-z} In \cite[Theorem 3.7]{derek2}, examples are given of weights $\psi \in \Hi$ such that \W\ is hyponormal when $\ph(z) = \frac{sz}{1-(1-s)z}, 0 < s < 1$. Every hyponormal operator is normaloid, but the weights are not as prescribed in Corollary \ref{result}. Therefore, it must be that $\rho(\W) > |\psi(0)|$. \end{example}

When \ph\ has Denjoy-Wolff point $a \in \D$, the primary differentiation of spectrum comes from whether or not \ph\ has a fixed point or a periodic point on $\partial \D$. Here, we obtain a partial result, that gives the spectral radius in Example \ref{2-z}. The proof of the following theorem is heavily borrowed from theorems about \C\ in \cite{Carlbook}.

\begin{remark}\label{remark1} Let $\ph_{n}$ denote the $n$th iterate of \ph, i.e. $\ph_{n} = \ph \circ \ph \dots \circ \ph$, $n$ times. By the discussion ahead of \cite[Theorem 7.36]{Carlbook}, when \ph\ has Denjoy-Wolff point in \D, is analytic in a neighborhood of the closed disk, and is not an inner function, there is an integer $n$ so that the set $S_n = \{ w: |w| = 1 \textrm{ and } |\ph_{n}(w)| = 1 \}$ is either empty or consists only of the finitely many fixed points of $\ph_n$ on the circle. The essential spectral radius of $\C$ is $$\rho_{e}(\C)=\max \{ \ph'_{n}(w)^{-1/2n} : w \in S_{n} \}. $$ If $S_{n}$ is empty, then \C\ is power-compact, which we have covered, so we will assume $S_{n}$ is nonempty. We will say that the chosen element $b$ of $S_{n}$ \textit{establishes} $\rho_{e}(\C)$. \end{remark}

\begin{theorem}\label{essspec} Suppose \ph, not an inner function, is an analytic self-map of \D\ which is univalent on \D\ and analytic in a neighborhood of $\overline{\D}$, with Denjoy-Wolff point $a \in \D$. Suppose $b \in \partial \D$ establishes $\rho_{e}(\C)$. Let $\psi \in \Hi$ be continuous at $b$ and let $|\psi(b)| = \| \psi \|_{\infty}$. Then 
\begin{enumerate}
    \item $\| \W \|_{e} = |\psi(b)| \| \C \|_{e}$,
    \item $\rho_{e}(\W) = |\psi(b)| \rho_{e}(\C)$, and
    \item  $\rho(\W) = \max \{|\psi(a)|, |\psi(b)|\rho_{e}(\C) \}$.
\end{enumerate}

\end{theorem}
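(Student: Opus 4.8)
The plan is to establish the three claims essentially independently, borrowing the machinery Cowen--MacCluer develop for unweighted composition operators in \cite{Carlbook} and tracking how the Toeplitz weight interacts with it. The unifying idea is that since $\psi$ is continuous at $b$ with $|\psi(b)| = \|\psi\|_\infty$, near the ``action point'' $b$ the operator $\W$ behaves like $\psi(b)\C$ up to a compact (or lower-order) perturbation, while away from $b$ the weight is strictly smaller in modulus.

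\medskip

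\noindent\textbf{Claim (1): the essential norm.} First I would get the easy inequality $\|\W\|_e \le \|T_\psi\|_e^{?}$ --- no; rather, observe $\|\W\|_e \le \|T_\psi\C\|_e \le \|\psi\|_\infty \|\C\|_e = |\psi(b)|\|\C\|_e$ by submultiplicativity of the essential norm and fact (2) in the introduction. For the reverse inequality, I would use a test-sequence argument: take a sequence of normalized reproducing kernels (or their images under $\C$-adjoint-type maps) concentrating at $b$, along which $\|\C f_n\| \to \|\C\|_e$ in the appropriate sense; since $\psi$ is continuous at $b$, multiplication by $\psi$ acts asymptotically as multiplication by the scalar $\psi(b)$ on functions concentrating their mass near $b$, so $\|\W f_n\| \to |\psi(b)|\|\C\|_e$. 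Combined with weak-null-ness of $f_n$ this forces $\|\W - Q\| \ge |\psi(b)|\|\C\|_e$ for every compact $Q$. The structural input here is exactly the description in Remark \ref{remark1} of where $|\ph_n|$ reaches the boundary, which tells us the essential norm of $\C$ is ``localized'' at the finitely many boundary points of $S_n$, and $b$ is the one where the derivative factor is largest.

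\medskip

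\noindent\textbf{Claim (2): the essential spectral radius.} This follows from (1) applied to powers, together with the iterate structure. Since $\W^k = T_{\psi_{[k]}}C_{\ph_k}$ where $\psi_{[k]} = \psi\cdot(\psi\circ\ph)\cdots(\psi\circ\ph_{k-1})$, and since $\ph$ fixes $b$ in the sense that $b \in S_n$ is a fixed point of $\ph_n$ (so the orbit of $b$ under $\ph$ stays on the circle and $\psi_{[k]}(b) = \prod \psi(\ph_j(b))$ with each $|\psi(\ph_j(b))| \le \|\psi\|_\infty = |\psi(b)|$, but the relevant boundary point for $C_{\ph_k}$ is again $b$), one computes $\|\W^k\|_e^{1/k} \to |\psi(b)|\,\rho_e(\C)$ using fact (3) and part (1). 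The care needed is to check that the weight picked up along the orbit does not beat $|\psi(b)|$ per step --- this is where $|\psi(b)| = \|\psi\|_\infty$ is used crucially, giving the clean product bound --- and that the boundary point establishing $\rho_e(C_{\ph_k})$ is compatible with $b$ across all $k$, which Remark \ref{remark1} arranges.

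\medskip

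\noindent\textbf{Claim (3): the spectral radius.} Here $\rho(\W) = \max\{\rho_e(\W), \text{largest modulus of eigenvalues outside the essential spectrum}\}$. By the general principle that for these operators the point spectrum outside the essential spectral circle is governed by the interior Denjoy--Wolff point, the eigenvalue part contributes exactly $|\psi(a)|$: indeed $\Ws K_a = \overline{\psi(a)}K_a$ always gives $|\psi(a)| \le \rho(\W)$, and borrowing the spectral description from \cite{Carlbook} (the weighted analogue of the ``$\sigma(\C)$ is a spiral/annulus'' results) shows no eigenvalue exceeds $\max\{|\psi(a)|, \rho_e(\W)\}$. Combining with (2), $\rho(\W) = \max\{|\psi(a)|, |\psi(b)|\rho_e(\C)\}$.

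\medskip

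\noindent The main obstacle I anticipate is \textbf{Claim (1)}, specifically the lower bound $\|\W\|_e \ge |\psi(b)|\|\C\|_e$: making rigorous the idea that ``multiplication by $\psi$ acts like the scalar $\psi(b)$ on functions whose $H^2$ mass concentrates at $b$'' requires a careful choice of test functions adapted to the univalent, boundary-analytic $\ph$ (likely modeled on the change-of-variable / reproducing-kernel arguments behind \cite[Theorem 7.36]{Carlbook}), and one must verify that the part of $T_\psi$ coming from $\psi - \psi(b)$ is genuinely negligible (compact when restricted appropriately, or at least small in norm on the relevant subspace) rather than merely pointwise small. Once that localization lemma is in hand, parts (2) and (3) are bookkeeping with iterates and the spectral-mapping/functional-calculus facts already quoted.
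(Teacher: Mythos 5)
Your overall architecture matches the paper's: submultiplicativity of the essential norm for the upper bound in (1), a weakly null test sequence of normalized kernels for the lower bound, the factorization $\W^{k} = T_{(\psi)(\psi\circ\ph)\cdots(\psi\circ\ph_{k-1})}C_{\ph_{k}}$ for (2), and Fredholm theory plus $\Ws K_{a} = \overline{\psi(a)}K_{a}$ for (3). However, the step you flag as the main anticipated obstacle --- the lower bound $\| \W \|_{e} \geq |\psi(b)| \| \C \|_{e}$ --- is left open in your proposal, and it is precisely where the paper's proof has its one essential trick. You do not need a localization lemma showing that $T_{\psi} - \psi(b)I$ is negligible on functions concentrating mass at $b$; that framing makes the problem harder than it is. Work with adjoints instead: for $k_j = K_{w_j}/\|K_{w_j}\|$ with $w_j \to b$, the exact identity $\Ws K_{w} = \overline{\psi(w)}\,K_{\ph(w)}$ gives $\|\Ws k_j\| = |\psi(w_j)|\,\|\Cs k_j\|$ with no error term at all. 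Since $k_j \to 0$ weakly and $Q^{*}$ is compact, $\|\W - Q\| \geq \limsup_j \|(\W - Q)^{*}k_j\| = \limsup_j |\psi(w_j)|\,\|\Cs k_j\|$, and continuity of $\psi$ at $b$ together with the fact that $b$ establishes $\rho_{e}(\C)$ (so that $\limsup_j \|\Cs k_j\| = \|\C\|_{e}$ along $w_j \to b$, by the reproducing-kernel formula of \cite[Theorem 7.31]{Carlbook} in the univalent setting) finishes the bound. The contribution of $\psi - \psi(b)$ never has to be estimated, because the adjoint of a weighted composition operator evaluates the weight pointwise at $w_j$.

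Two smaller remarks. In (2) your outline agrees with the paper: first assume $b$ is fixed by \ph\ so the product weight at $b$ is exactly $\psi(b)^{k}$ with sup-norm $|\psi(b)|^{k}$, then handle the general case by noting that $\left(\|\W^{k}\|_{e}\right)^{1/k}$ converges and passing to the subsequence of indices $nk$, where $\ph_{n}$ fixes $b$. In (3), your appeal to a ``general principle'' and a ``weighted analogue of the spiral/annulus results'' is vaguer than what is actually needed and available: the paper quotes \cite[Proposition 4.3]{Hammond}, which says the only possible eigenvalues of \W\ are $\{0, \psi(a), \psi(a)\ph'(a), \psi(a)(\ph'(a))^{2}, \dots\}$; the largest in modulus is $\psi(a)$, which does lie in $\sigma(\W)$ because $\Ws K_{a} = \overline{\psi(a)}K_{a}$, and the part of $\sigma(\W)$ off the essential spectrum consists of eigenvalues of finite multiplicity. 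You should cite that specific eigenvalue list rather than leaving the upper bound on the point spectrum unjustified.
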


\begin{proof}

By \cite[Theorem 7.31]{Carlbook}, we have that $$\rho_{e}(\C) = \lim_{k \rightarrow \infty} 
\left(\limsup_{|w| \rightarrow 1} \frac{\|K_{\ph_k(w)}\|}{\|K_w\|} \right)$$ and this happens in particular as $w$ approaches the element $b$ of $\partial \D$ that gives the maximum value in the definition in Remark \ref{remark1}. 

Now, we adapt the proof from \cite[Proposition 3.13.]{Carlbook} to show that $$\| \W \|_{e} \geq |\psi(b)| \| \C \|_{e}.$$

Let $w_j$ be a sequence in $\D$ tending to $\partial \D$. Then the normalized weight sequence $k_j = \frac{K_{w_j}}{\| K_{w_j} \|}$ tends to $0$ weakly as $j$ approaches infinity. If $Q$ is an arbitrary compact operator on \Ht, then $Q^{*}(k_j) \rightarrow 0$. 

\noindent Now, $\| \W \|_{e} = \inf \{ \| \W - Q\|: Q \textrm{ is compact} \}$, and for $Q$ compact,
\begin{align*}
\|\W - Q\| &\geq \limsup_{j \rightarrow \infty} \|(\W - Q)^{*}k_j\| \\&= \limsup_{j \rightarrow \infty} \|\Ws k_j\|\\ &=\limsup_{j \rightarrow \infty} |\psi(w_j)| \| \Cs k_j\|. 
\end{align*}
Since  $\| \C \|_{e} = \displaystyle \limsup_{j \rightarrow \infty} \| \Cs k_j\|$ is achieved by taking $w_j$ tending towards $b$, and likewise $\displaystyle \limsup_{j \rightarrow \infty} |\psi(w_j)| = |\psi(b)| = \|\psi \|_{\infty}$ is achieved by taking $w_j$ towards $b$, we have $\displaystyle \limsup_{j \rightarrow \infty} |\psi(w_j)| \| \Cs k_j\| = |\psi(b)| \| \C \|_{e}$ and $\| \W \|_{e} \geq |\psi(b)| \| \C \|_{e}$. 

For the other direction, note that since the compact operators are an ideal, $T_\psi Q$ is compact for any compact operator $Q$, and if $B \subseteq A$, then $\inf A \leq \inf B$. Then 

\begin{align*}
    \| \W\|_{e} &= \inf \{ \| \W - Q\|: Q \textrm{ is compact} \} \\
    &\leq \inf \{ \| \W - T_{\psi} Q \|: Q \textrm{ is compact} \} \\
    &= \inf \{ \| T_\psi (\C - Q) \| : Q \textrm{ is compact} \} \\
    &\leq \inf \{ \| T_{\psi} \| \| \C - Q \|: Q \textrm{ is compact} \} \\ 
    &= \inf \{ \| \psi \|_{\infty} \| \C - Q \|: Q \textrm{ is compact} \} \\ 
    &= \inf \{ |\psi(b)| \| \C - Q \|: Q \textrm{ is compact} \} \\ 
    &=|\psi(b)| \inf  \{ \| \C - Q \|: Q \textrm{ is compact} \}\\
    &=|\psi(b)| \| \C \|_{e}.
\end{align*}

\noindent Therefore we have $\| \W \|_{e} = |\psi(b)| \| \C \|_{e}$. 

Suppose momentarily that $b$ is a fixed point of \ph. Since \ph\ is analytic in a neighborhood of $\overline{\D}$ (i.e. continuous at $b$), the above gives the same result if $\psi$ is replaced by $\psi \circ \ph_{k}$ for any $k$.  Then,

\begin{align*}\rho_{e}(\W) &= \lim_{k \rightarrow \infty} \left(\| \W^{k}\|_{e}\right)^{1/k} \\ &=\lim_{k \rightarrow \infty} \left(\| T_{(\psi) (\psi \circ \varphi) (\psi \circ \varphi_2) \cdots (\psi \circ \varphi_{k-1})} C_{\varphi_{k}}\|_{e}\right)^{1/k}\\ &= \lim_{k \rightarrow \infty} \left( |\psi(b)|^{k} \| C_{\varphi_{k}}\|_{e}\right)^{1/k} \\&= |\psi(b)|\lim_{k \rightarrow \infty} \left( \| C_{\varphi_{k}}\|_{e}\right)^{1/k}\\ &= |\psi(b)|\rho_{e}(\C).
\end{align*}

\noindent Now, while $b$ may not be a fixed point of \ph, we know it is the fixed point of some $n$th iterate $\ph_{n}$. Furthermore, we know that $\rho_{e}(\W) =\displaystyle \lim_{k \rightarrow \infty} \left(\| \W^{k}\|_{e}\right)^{1/k}$ is a convergent sequence. Therefore, every subsequence converges to $\rho_{e}(\W)$. By taking the subsequence with indices $nk$ as $k \rightarrow \infty$, we see that $\rho_{e}(\W) = |\psi(b)|\rho_{e}(\C)$. 

The complement in $\sigma(\W)$ of the essential spectrum consists of eigenvalues of finite multiplicity. By \cite[Proposition 4.3]{Hammond}, any eigenvalue of \W\ must be of the form $$\{ 0, \psi(a), \psi(a)\ph'(a), \psi(a)(\ph'(a))^2, \psi(a)(\ph'(a))^3, \dots \}$$

\noindent and the largest of those values in magnitude is $\psi(a)$. This value is necessarily in $\sigma(\W)$, since $\Ws(K_{a}) = \overline{\psi(a)}K_{a}$. 

Therefore, $\rho(\W) = \max \{|\psi(a)|, |\psi(b)|\rho_{e}(\C) \}$. 

\end{proof}

\begin{cor}\label{intnormaloid} Suppose \ph, not an inner function, is an analytic self-map of \D\ which is univalent on \D\ and analytic in a neighborhood of $\overline{\D}$, with Denjoy-Wolff point $a \in D$. Suppose $b \in \partial \D$ establishes $\rho_{e}(\C)$. Let $\psi \in \Hi$ be continuous at $b$ and let $|\psi(b)| = \| \psi \|_{\infty}$.

Suppose further that \W\ is normaloid. Then, either $\| \W \| = \rho(\W) = |\psi(a)|$ and $\psi = \psi(a) \frac{K_{a}}{K_{a}\circ\varphi}$, or $ \| \W \| = \rho(\W) = \rho_{e}(\W) = |\psi(b)|\rho_{e}(\C)$.
\end{cor}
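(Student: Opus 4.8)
The plan is to feed the normaloid hypothesis $\| \W \| = \rho(\W)$ into the formula for $\rho(\W)$ from Theorem \ref{essspec}(3) and split into two cases according to which of the two terms realizes the maximum $\max \{ |\psi(a)|, |\psi(b)| \rho_{e}(\C) \}$.

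First I would treat the case $|\psi(a)| \geq |\psi(b)| \rho_{e}(\C)$. Here Theorem \ref{essspec}(3) gives $\rho(\W) = |\psi(a)|$, so the normaloid hypothesis forces $\| \W \| = |\psi(a)|$. Since \ph\ has Denjoy-Wolff point $a \in \D$ and $\psi \in \Hi$, Theorem \ref{interior} applies directly and yields $\psi = \psi(a) \frac{K_{a}}{K_{a} \circ \varphi}$; this is the first alternative, and along the way we have recorded $\| \W \| = \rho(\W) = |\psi(a)|$.

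Next I would treat the case $|\psi(a)| < |\psi(b)| \rho_{e}(\C)$. Now Theorem \ref{essspec}(3) gives $\rho(\W) = |\psi(b)| \rho_{e}(\C)$, which by Theorem \ref{essspec}(2) is exactly $\rho_{e}(\W)$. Combining this with the normaloid hypothesis and the trivial chain $\rho_{e}(\W) \leq \rho(\W) \leq \| \W \|$ gives $\| \W \| = \rho(\W) = \rho_{e}(\W) = |\psi(b)| \rho_{e}(\C)$, which is the second alternative.

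There is no substantial obstacle here: the corollary is an immediate bookkeeping consequence of Theorems \ref{essspec} and \ref{interior}. The only points requiring a little care are that the two alternatives overlap, so the ``either $\dots$ or'' must be read inclusively (when $|\psi(a)| = |\psi(b)| \rho_{e}(\C)$ both conclusions hold at once), and that one should note Theorem \ref{interior} is indeed available, since it requires only $a \in \D$ and $\psi \in \Hi$, which are weaker than the standing hypotheses of this corollary.
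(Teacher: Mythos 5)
Your proposal is correct and follows essentially the same route as the paper: apply Theorem \ref{essspec}(3) to split on which term achieves the maximum, use the normaloid hypothesis together with Theorem \ref{interior} in the first case, and Theorem \ref{essspec}(2) in the second. Your additional remarks about the inclusive reading of the alternatives and the applicability of Theorem \ref{interior} are sensible but not needed beyond what the paper's own one-line argument already contains.
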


\begin{proof} By Theorem \ref{essspec}, $\rho(\W) = \max \{|\psi(a)|, |\psi(b)|\rho_{e}(\C) \}$. If $\rho(\W) = |\psi(a)|$, by Theorem \ref{interior}, $\psi$ has the form $\psi = \psi(a) \frac{K_{a}}{K_{a}\circ\varphi}$. Otherwise, $\rho(\W) = |\psi(b)|\rho_{e}(\C)$. 

\end{proof}

\begin{example} Let \W\ be the hyponormal operator given by $\psi(z) = \frac{2e^{z}}{(2-z)}, \ph(z) = \frac{z}{2-z}$ \cite[Example 3.8]{derek2}. Then $\rho(\W) = |\psi(1)|\ph'(1)^{-1/2} = \sqrt{2}e$. \end{example}

While neither Theorem \ref{essspec} nor Corollary \ref{intnormaloid}  gives sufficient conditions for \W\ to be normaloid, Theorem \ref{essspec} does accomplish something else. An operator $T$ is said to be \textit{essentially normaloid} if $\rho_{e}(T) = \| T \|_{e}$. 

\begin{cor} Suppose \ph, not an inner function, is an analytic self-map of \D\ which is univalent on \D\ and analytic in a neighborhood of $\overline{\D}$, with Denjoy-Wolff point $a \in D$. Let $b$ be a fixed point of \ph\ on $\partial D$ such $b$ that establishes $\rho_{e}(\C)$. Let $\psi \in \Hi$ be continuous at $b$ and let $|\psi(b)| = \| \psi \|_{\infty}$. Then \W\ is essentially normaloid if and only if \C\ is essentially normaloid. \end{cor}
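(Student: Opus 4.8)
The plan is to deduce this almost immediately from Theorem \ref{essspec}. First I would dispose of a degenerate possibility: since $|\psi(b)| = \|\psi\|_\infty$, either $\psi \equiv 0$ (in which case \W\ is the zero operator, trivially essentially normaloid, and the statement should be read under the standing assumption $\psi \not\equiv 0$), or $|\psi(b)| = \|\psi\|_\infty > 0$. So I would assume $|\psi(b)| > 0$ throughout.

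Next, I would invoke parts (1) and (2) of Theorem \ref{essspec}, which apply verbatim under the present hypotheses --- indeed the hypotheses here are slightly stronger, since $b$ is assumed to be an actual fixed point of \ph\ rather than of some iterate, which is exactly the situation in which the clean formula $\rho_{e}(\W) = |\psi(b)|\rho_{e}(\C)$ was derived (the subsequence argument in that proof is not even needed). This gives $\| \W \|_{e} = |\psi(b)| \| \C \|_{e}$ and $\rho_{e}(\W) = |\psi(b)| \rho_{e}(\C)$.

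Finally, by definition \W\ is essentially normaloid if and only if $\| \W \|_{e} = \rho_{e}(\W)$, that is, if and only if $|\psi(b)| \| \C \|_{e} = |\psi(b)| \rho_{e}(\C)$; dividing through by $|\psi(b)| > 0$, this holds exactly when $\| \C \|_{e} = \rho_{e}(\C)$, i.e. when \C\ is essentially normaloid. There is no real obstacle: the corollary is just the observation that scaling both the essential norm and the essential spectral radius by a common positive constant preserves equality between them, with the only point of care being the (excluded or trivial) case $\psi \equiv 0$.
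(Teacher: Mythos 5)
Your proposal is correct and matches the paper's proof, which likewise derives the corollary immediately from parts (1) and (2) of Theorem \ref{essspec}; your extra care about the degenerate case $\psi \equiv 0$ is a reasonable (if minor) refinement the paper leaves implicit.
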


\begin{proof} This is an immediate consequence of $(1)$ and $(2)$ in Theorem \ref{essspec}. \end{proof}

\begin{example} Let $\ph(z) = \frac{z}{2-z}, \psi = e^{-z} $. Then $|\psi(0)| = 1$ and $|\psi(1)|\ph'(1)^{-1/2} = \frac{\sqrt{2}}{2e} < 1$. Since $\psi$ is not of the form $|\psi(0)|\frac{K_{0}}{K_{0} \circ \ph} \equiv 1$, \W\ is not normaloid. However, since \C\ is essentially normaloid \cite[Theorem 7.31, 7.36]{Carlbook}, therefore so is \W. \end{example}

\section{Uniformly Convergent Iteration (UCI)}

We now turn our attention to when \ph\ has Denjoy-Wolff point $a \in \partial \D$. We wish to continue to assume that $\rho(\W) = |\psi(a)|\rho(\C)$. Our goal in this section, before determining when such operators are normaloid, is to show that this class is non-trivial. To do so, we will put restrictions on how the iterates of \ph\ converge to the Denjoy-Wolff point. This definition is from \cite{derek}, where this hypothesis is used to determine the spectrum of weighted composition operators in this setting. 

The Denjoy-Wolff Theorem \cite[Theorem 2.51]{Carlbook} states that all analytic self-maps of \D\ other than elliptic automorphisms have a point in $\overline{\D}$ that they converge to under iteration on compact subsets of \D. Here, we ask the convergence to be stronger. 

\begin{definition}[Uniformly Convergent Iteration] We say \ph\ is UCI if \ph\ is an analytic self-map of \D\ and the iterates of \ph\ converge uniformly to the Denjoy-Wolff point uniformly on \textit{all} of \D, rather than compact subsets of \D. \end{definition} 

If \ph\ is UCI and the Denjoy-Wolff point $a$ of \ph\ belongs to \D, then \W\ is power-compact \cite[Corollary 2]{derek}, so we have already covered that scenario in Section \ref{intsec} without requiring this additional hypothesis. 

Analytic self-maps of \D\ that exhibit UCI while having Denjoy-Wolff point $a$ on $\partial \D$ are a non-trivial set, and include maps whose derivative at the Denjoy-Wolff point are both less than $1$ (e.g. $\ph(z) = (z+1)/2$) and equal to $1$ (e.g. $\ph(z) = 1/(2-z)$) \cite[Example 5]{derek}. A simple sufficient condition for UCI when $\ph'(a) < 1$ is that $\phi_{N}(\overline{\D}) \subseteq \D \cup \{ a \}$ for some $N$ \cite[Theorem 4]{derek}. This includes, then, any linear-fractional map with Denjoy-Wolff point on the boundary and $\ph'(a) < 1$.

The main reason to now introduce this definition is the following theorem, proved in \cite{derek}.

\begin{theorem}\label{uci} Suppose \ph\ is UCI with Denjoy-Wolff point $a \in \partial \D$, $\psi \in \Hi$ is continuous at $a$, and $\psi(a) \neq 0$.Then:
\begin{enumerate}
    \item $\overline{\sigma_{p}(\psi(a)\C)}\subseteq\overline{\sigma_{ap}(T_{\psi}\C)}\subseteq\sigma(T_{\psi}\C)\subseteq\sigma(\psi(a)\C),$
\item  If $\overline{\sigma_{p}(\C)}=\sigma(\C)$,
then $\sigma(T_{\psi}\C)=\sigma(\psi(a)\C)$, 
\item If $\ph'(a) < 1$, then $\sigma(T_{\psi}\C)=\sigma(\psi(a)\C)$ \textbf{and} $\sigma_{p}(\W)=\sigma_{p}(\psi(a)\C)$. 
\end{enumerate}

\end{theorem}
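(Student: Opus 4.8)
The engine behind all three parts is the following consequence of the hypotheses: since \ph\ is UCI and $\psi$ is continuous at $a$ with $\psi(a)\neq 0$, the functions $\psi\circ\varphi_k$ converge to the constant $\psi(a)$ \emph{uniformly} on all of \D, i.e. $\|T_{\psi\circ\varphi_k}-\psi(a)I\|\to 0$. Combined with the factorization $\W^{\,n}=T_{\psi}\,T_{\psi\circ\varphi}\cdots T_{\psi\circ\varphi_{n-1}}\,C_{\varphi_n}$ (valid because Toeplitz operators with \Hi\ symbols multiply and $C_\varphi T_\psi=T_{\psi\circ\varphi}C_\varphi$), this says that for large $n$ the iterate $\W^{\,n}$ is, up to the scalar $\psi(a)^{n}$, a genuinely norm-small perturbation of $C_{\varphi_n}$. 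A first dividend is the radius bound: $\|\W^{\,n}\|^{1/n}\le\big(\prod_{k<n}\|\psi\circ\varphi_k\|_\infty\big)^{1/n}\|C_{\varphi_n}\|^{1/n}$ together with a Ces\`aro/averaging argument (each $\|\psi\circ\varphi_k\|_\infty\to|\psi(a)|$) gives $\rho(\W)\le|\psi(a)|\rho(\C)$, which is also forced afterwards by the inclusion $\sigma(\W)\subseteq\sigma(\psi(a)\C)$ in part (1).

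For the chain in (1) the middle inclusion $\overline{\sigma_{ap}(\W)}\subseteq\sigma(\W)$ is automatic. For $\overline{\sigma_p(\psi(a)\C)}\subseteq\overline{\sigma_{ap}(\W)}$, take $f\neq 0$ with $\psi(a)(f\circ\varphi)=\lambda f$; then $\W^{\,n}f=\lambda^{n}\Pi_n f$ where $\Pi_n:=\prod_{k=0}^{n-1}(\psi\circ\varphi_k)/\psi(a)$, and the telescoping identity $(\W-\lambda)\big[(\sum_{m=0}^{n-1}\Pi_m)f\big]=\lambda(\Pi_n-1)f$ (for $\lambda\neq0$; the $\lambda=0$ case is handled separately and $0$ is always in both spectra) exhibits $\lambda$, or a nearby value, as an approximate eigenvalue: $\Pi_n(a)=1$ and $\psi\circ\varphi_k\to\psi(a)$ make $\|(\Pi_n-1)f\|$ negligible compared with the partial sums on the left, whose norms grow like $n\|f\|$, so normalizing yields approximate eigenvectors. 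For $\sigma(\W)\subseteq\sigma(\psi(a)\C)$, fix $\lambda\notin\sigma(\psi(a)\C)$ and invert $\W-\lambda$ by passing to powers ($\lambda\in\sigma(\W)\iff\lambda^{n}\in\sigma(\W^{\,n})$): it suffices to invert $U_0\cdots U_{n-1}C_{\varphi_n}-\lambda^{n}$ with $U_k:=T_{\psi\circ\varphi_k}\to\psi(a)I$, written as $\psi(a)^{n}C_{\varphi_n}-\lambda^{n}$ plus the error $(U_0\cdots U_{n-1}-\psi(a)^{n}I)C_{\varphi_n}$; a Neumann series converges for large $n$ because $\lambda^{n}$ stays a definite distance off $\sigma(\psi(a)^{n}C_{\varphi_n})$ while the error is $o(\|(\psi(a)^{n}C_{\varphi_n}-\lambda^{n})^{-1}\|^{-1})$. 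Chaining the three pieces gives (1).

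Parts (2) and (3) follow with one extra ingredient apiece. If $\overline{\sigma_p(\C)}=\sigma(\C)$ then $\overline{\sigma_p(\psi(a)\C)}=\psi(a)\overline{\sigma_p(\C)}=\psi(a)\sigma(\C)=\sigma(\psi(a)\C)$, so the outermost and innermost members of the chain in (1) coincide and everything collapses to $\sigma(\W)=\sigma(\psi(a)\C)$. For (3), first note that $\varphi'(a)<1$ forces $\overline{\sigma_p(\C)}=\sigma(\C)$ --- the linear-fractional model / K\"onigs eigenfunction construction produces eigenvalues filling an open disk of radius $\varphi'(a)^{-1/2}$ whose closure is the spectral disk --- so the spectral equality is the case (2) just established. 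For $\sigma_p(\W)=\sigma_p(\psi(a)\C)$, one inclusion is already in the chain; for the reverse, transfer eigenvectors by solving the cohomological equation $h/(h\circ\varphi)=\psi(a)/\psi$ for an invertible $h\in\Hi$, formally $\log h=-\sum_{k\ge0}\log\big((\psi\circ\varphi_k)/\psi(a)\big)$, the series converging in \Hi\ because $\varphi'(a)<1$ makes $\|\varphi_k-a\|_\infty$, hence $\|\psi\circ\varphi_k-\psi(a)\|_\infty$, decay geometrically; then $f\mapsto hf$ intertwines $\psi(a)\C$ with $\W$ and carries eigenvectors across.

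The main obstacle is the inclusion $\sigma(\W)\subseteq\sigma(\psi(a)\C)$: the perturbation $T_{\psi-\psi(a)}C_\varphi$ relating $\psi(a)\C$ to $\W$ is \emph{not} norm-small, so ordinary stability of the spectrum does not apply, and one must genuinely exploit that only the \emph{iterates} become negligible --- hence the detour through $\W^{\,n}$ and $\psi\circ\varphi_k\to\psi(a)$, with the Ces\`aro estimates needed to keep the error under control uniformly in $n$. A secondary technical point, relevant to (3), is the convergence in \Hi\ of the cohomological series under only continuity of $\psi$ at $a$; this is exactly where the geometric rate coming from $\varphi'(a)<1$ (via Julia--Carath\'eodory) is essential, and where the argument would break in the parabolic-type case $\varphi'(a)=1$.
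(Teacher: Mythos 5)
You should first be aware that the paper does not prove Theorem \ref{uci} at all: it is imported from \cite{derek} (``the following theorem, proved in \cite{derek}''), so there is no in-paper argument to measure yours against. Your central observation --- that UCI plus continuity of $\psi$ at $a$ gives $\|\psi\circ\varphi_k-\psi(a)\|_\infty\to 0$, hence $\|T_{\psi\circ\varphi_k}-\psi(a)I\|\to 0$, and that this smallness is only visible in the iterates --- is indeed the right engine, and the factorization of $\W^{n}$ is correct. But several steps do not close. In the first inclusion, the telescoping identity is fine, yet the claims that $\bigl\|\sum_{m<n}\Pi_m f\bigr\|$ grows like $n\|f\|$ and that $\|(\Pi_n-1)f\|$ is negligible are unsupported: the factors $\psi\circ\varphi_k/\psi(a)$ tend to $1$ with no summability, so $\Pi_n$ may drift to $0$ or to $\infty$, and nothing controls either side of your identity (your appeal to $\Pi_n(a)=1$ concerns a single boundary value and bounds no norm). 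The repair is to use the orbit itself: $(\W-\lambda)\W^{n}f=\lambda^{n+1}\Pi_n\bigl(\tfrac{\psi\circ\varphi_n}{\psi(a)}-1\bigr)f$, so $\|(\W-\lambda)\W^{n}f\|\le|\lambda|\,\bigl\|\tfrac{\psi\circ\varphi_n}{\psi(a)}-1\bigr\|_\infty\|\W^{n}f\|$ and the normalized $\W^{n}f$ are approximate eigenvectors directly.

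The more serious gaps are in $\sigma(\W)\subseteq\sigma(\psi(a)\C)$ and in part (3). Your Neumann series needs the error $(T_{\Psi_n}-\psi(a)^nI)C_{\varphi_n}$, with $\Psi_n=\prod_{k<n}\psi\circ\varphi_k$, to be small against $\|(\psi(a)^nC_{\varphi_n}-\lambda^n)^{-1}\|^{-1}$. But $\Psi_n-\psi(a)^n$ contains the factor $\psi$ itself, which is not close to $\psi(a)$ away from $a$; so $\|\Psi_n-\psi(a)^n\|_\infty$ is of order $|\psi(a)|^{n-1}\|\psi-\psi(a)\|_\infty$, comparable to the main term rather than $o$ of it, and the series need not converge --- the asserted $o(\cdot)$ bound is exactly the missing content. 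In addition, $\lambda\notin\sigma(\psi(a)\C)$ does not give $\lambda^n\notin\sigma\bigl((\psi(a)\C)^n\bigr)$ unless the spectrum is circularly symmetric (only $\lambda\in\sigma(\W)\Rightarrow\lambda^n\in\sigma(\W^{n})$ is available, and your argument needs its hypothesis to be checkable). Finally, in (3) the cohomological product $h=\prod_k(\psi\circ\varphi_k)/\psi(a)$ requires $\sum_k\|\psi\circ\varphi_k-\psi(a)\|_\infty<\infty$; mere continuity of $\psi$ at $a$ supplies no modulus of continuity, so even a geometric rate for $\|\varphi_k-a\|_\infty$ (itself not furnished by Julia--Carath\'eodory for the supremum over all of \D) does not yield summability. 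As written, the proposal secures the easy middle inclusion and, after repair, the approximate-point-spectrum inclusion, but not the containment $\sigma(\W)\subseteq\sigma(\psi(a)\C)$ on which parts (2) and (3) rest; for the actual arguments you should consult \cite{derek}.
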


We have an immediate corollary regarding the spectral radius.

\begin{cor}  Suppose \ph\ is UCI with Denjoy-Wolff point $a \in \partial \D$, $\psi \in \Hi$ is continuous at $a$, and $\psi(a) \neq 0$. Then $\rho(\W) = |\psi(a)|\rho(\C).$ \end{cor}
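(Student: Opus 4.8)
The plan is to derive this immediately from Theorem~\ref{uci}(1), which sandwiches $\sigma(\W)$ between $\overline{\sigma_p(\psi(a)\C)}$ and $\sigma(\psi(a)\C)$. Taking spectral radii of the outer sets, I first observe that $\rho(\psi(a)\C) = |\psi(a)|\rho(\C)$ since scaling an operator by a scalar scales its spectrum by that scalar. For the lower bound, I need that $\overline{\sigma_p(\psi(a)\C)}$ already has spectral radius equal to $|\psi(a)|\rho(\C)$; this follows from the known description of the spectrum of a composition operator with interior-or-boundary Denjoy-Wolff point together with the fact that $\rho(\C) = \ph'(a)^{-1/2}$ when $a \in \partial\D$ (see \cite[Theorem 3.9, Section 7]{Carlbook}), whose spectrum is a disk (or annulus) whose outer radius is an accumulation point of eigenvalues of $\C^*$, hence of $\W^*$. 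Since $\sigma(\W^*) = \overline{\sigma(\W)^*}$, this forces $\rho(\W) \geq |\psi(a)|\rho(\C)$.

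Concretely, the key steps in order: (i) invoke Theorem~\ref{uci}(1) to get $\overline{\sigma_p(\psi(a)\C)} \subseteq \sigma(\W) \subseteq \sigma(\psi(a)\C)$; (ii) conclude $\rho(\W) \leq \rho(\psi(a)\C) = |\psi(a)|\rho(\C)$ from the right-hand inclusion; (iii) for the reverse inequality, note that the spectral radius of $\C$ is attained as a limit of moduli of points in $\sigma_p(\C^*)$ (equivalently, $\overline{\sigma_p(\psi(a)\C)}$ contains points of modulus arbitrarily close to $|\psi(a)|\rho(\C)$ — indeed $\sigma(\C)$ and $\overline{\sigma_p(\C)}$ have the same outer radius by the standard spectral picture for non-power-compact composition operators with boundary Denjoy-Wolff point); (iv) conclude $\rho(\W) \geq \sup\{|\lambda| : \lambda \in \overline{\sigma_p(\psi(a)\C)}\} = |\psi(a)|\rho(\C)$; (v) combine (ii) and (iv).

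The only genuine subtlety is step (iii): one must be sure that $\overline{\sigma_p(\psi(a)\C)}$, rather than merely $\sigma(\psi(a)\C)$, already achieves the full spectral radius. For composition operators this is true because the spectrum of $\C$ (when $a \in \partial\D$ and $\C$ is not power-compact, which is the relevant case since the power-compact case is handled in Section~\ref{intsec}) is a closed disk of radius $\rho(\C)$ centered at the origin, and its boundary circle lies in the closure of the point spectrum of $\C^*$; scaling by $\psi(a)$ preserves this. Alternatively, and perhaps more cleanly, one can avoid step (iii) entirely by noting that if $\rho(\W) < |\psi(a)|\rho(\C) = \rho(\psi(a)\C)$, then the strict inclusion $\sigma(\W) \subsetneq \sigma(\psi(a)\C)$ would have to omit an entire neighborhood of some boundary point of $\sigma(\psi(a)\C)$ of maximal modulus — but such a point is a limit of eigenvalues of $(\psi(a)\C)^*$, hence lies in $\overline{\sigma_p(\psi(a)\C)^*}^* \subseteq \sigma(\W)$ after conjugating, a contradiction. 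Either way, the argument is short; the work is entirely in citing the correct facts about $\sigma(\C)$ from \cite{Carlbook}.
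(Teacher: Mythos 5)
There is a genuine gap in your lower bound, concentrated in the parabolic case $\ph'(a)=1$. Your step (iii) rests on the claim that $\sigma(\C)$ is a closed disk of radius $\rho(\C)$ whose boundary circle lies in the closure of a point spectrum; that ``standard spectral picture'' is a hyperbolic-case fact ($\ph'(a)<1$, where an annulus of eigenvalues of $\C$ reaches out to modulus $\ph'(a)^{-1/2}$), and it fails when $\ph'(a)=1$: for a parabolic non-automorphism the spectrum of $\C$ can be a spiral or segment together with $\{0\}$, not a disk, and there is no comparable supply of eigenvalues near the unit circle. The paper sidesteps this entirely with one observation you missed: the constant function $1$ is always an eigenvector of $\C$ with eigenvalue $1$, so $\psi(a)\in\sigma_p(\psi(a)\C)$, and since $\rho(\C)=1$ when $\ph'(a)=1$, this single eigenvalue already has modulus $|\psi(a)|\rho(\C)$; Theorem \ref{uci}(1) then gives $\rho(\W)\geq|\psi(a)|$. (In the hyperbolic case the paper does not need (1) at all for the lower bound --- part (3) gives $\sigma(\W)=\sigma(\psi(a)\C)$ outright.)

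A second, related problem is your repeated slide between $\sigma_p(\C)$ and $\sigma_p(\C^*)$. Theorem \ref{uci}(1) lower-bounds $\sigma(\W)$ by $\overline{\sigma_p(\psi(a)\C)}$, the point spectrum of the operator itself; it says nothing about eigenvalues of the adjoint. So the step ``an accumulation point of eigenvalues of $\C^*$, hence of $\W^*$'' and the alternative argument via $\overline{\sigma_p(\psi(a)\C)^*}{}^*\subseteq\sigma(\W)$ are not supported by anything you have cited --- eigenvalues of $T^*$ need not come from eigenvalues of $T$, and the weight $\psi$ does not transport adjoint eigenvectors of $\C$ to adjoint eigenvectors of $\W$ except in the special reproducing-kernel situation at the Denjoy--Wolff point, which is not what you invoke. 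Your upper bound (step (ii)) is correct and matches the paper; to repair the proof, split into the cases $\ph'(a)<1$ (use Theorem \ref{uci}(3)) and $\ph'(a)=1$ (use $1\in\sigma_p(\C)$ and $\rho(\C)=1$).
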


\begin{proof} If $\ph'(a) < 1$, then $(3)$ of the Theorem \ref{uci} makes this clear. If $\ph'(a) = 1$, note that $1 \in \sigma_{p}(\C)$, so $\psi(a) \in \sigma_{p}(\psi(a)\C)$, and therefore by $(1)$ of Theorem \ref{uci}, $\rho(\W) \geq |\psi(a)|$. Again by $(1)$, we also have $\sigma(\W) \subseteq \sigma(\psi(a)\C)$, so $\rho(\W) \leq \rho(\psi(a)\C) = |\psi(a)|\rho(\C) = |\psi(a)|$, since $\rho(\C) = 1$ \cite[Theorem 3.9]{Carlbook}. Therefore $\rho(\W) = |\psi(a)| = |\psi(a)|\rho(\C).$   \end{proof}

\section{$a \in \partial \D$}

Here we follow the same path as Section \ref{intsec}. We will continue to assume that we have $\rho(\W) = |\psi(a)|\rho(\C)$, and will seek to determine conditions for which $\| \W \|$ is the same. We will also obtain a few corollaries for when \ph\ is explicitly UCI. 

\begin{theorem}\label{bdy} Suppose \ph\ is an analytic self-map of \D\ with Denjoy-Wolff point $a \in \partial \D$, $\psi \in \Hi$, and $\psi$ is  continuous at the Denjoy-Wolff point $a$ of \ph, with $\| \psi \|_{\infty} = |\psi(a)|$. Furthermore, assume $\rho(\W) = |\psi(a)|\rho(\C).$ If \C\ is normaloid,  then \W\ is normaloid. \end{theorem}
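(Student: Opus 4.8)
The plan is to bound $\|\W\|$ above by $|\psi(a)|\rho(\C)$, since the reverse inequality $\|\W\| \geq \rho(\W) = |\psi(a)|\rho(\C)$ is automatic, and these two together with the hypothesis give $\|\W\| = \rho(\W)$, i.e. \W\ is normaloid. Since \C\ is normaloid and $\rho(\C) = 1$ when the Denjoy--Wolff point is on $\partial\D$ (by \cite[Theorem 3.9]{Carlbook}), we have $\|\C\| = 1$, so it suffices to show $\|\W\| \leq |\psi(a)| = \|\psi\|_\infty$. The naive bound $\|\W\| = \|T_\psi \C\| \leq \|\psi\|_\infty \|\C\| = \|\psi\|_\infty$ already does this! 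So the real content is just assembling these observations: $\|\W\| \leq \|\psi\|_\infty = |\psi(a)| = |\psi(a)|\rho(\C) = \rho(\W) \leq \|\W\|$, forcing equality throughout.

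So the main steps are: (1) invoke $\rho(\C)=1$ from \cite[Theorem 3.9]{Carlbook} since $a\in\partial\D$; (2) use \C\ normaloid to get $\|\C\|=\rho(\C)=1$; (3) apply the submultiplicative norm bound $\|\W\| = \|T_\psi\C\| \leq \|T_\psi\|\,\|\C\| = \|\psi\|_\infty \cdot 1$, using fact (2) in the introduction that $\|T_\psi\| = \|\psi\|_\infty$; (4) use the hypothesis $\|\psi\|_\infty = |\psi(a)|$ to rewrite this as $\|\W\| \leq |\psi(a)| = |\psi(a)|\rho(\C) = \rho(\W)$, the last equality being the standing assumption; (5) conclude $\|\W\| \leq \rho(\W) \leq \|\W\|$, hence $\|\W\| = \rho(\W)$.

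Frankly, I do not expect a genuine obstacle here — the statement is essentially a bookkeeping consequence of the norm inequality $\|\W\| \leq \|\psi\|_\infty\|\C\|$ (noted in the introduction) combined with the hypotheses, which have been arranged precisely so that both sides of the spectral-radius/norm sandwich collapse. The only thing to be careful about is making sure each hypothesis is used exactly where needed: continuity of $\psi$ at $a$ is what makes $|\psi(a)|$ a meaningful boundary value, $\|\psi\|_\infty = |\psi(a)|$ is what ties the Toeplitz norm to the spectral-radius formula, $\rho(\W) = |\psi(a)|\rho(\C)$ is the standing assumption, and normaloidness of \C\ is what upgrades $\|\C\| \leq$ (something) to $\|\C\| = \rho(\C) = 1$. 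If one wanted to make the argument self-contained one could also remark that $\|\W\| \geq \|\W\mathbf{1}\|_2 = \|\psi\|_2$, but this is not needed for the inequality chain above; it is the content of Proposition \ref{psi2} rather than of this theorem.
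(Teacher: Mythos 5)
Your overall strategy is the paper's strategy --- sandwich $\|\W\|$ between $\rho(\W)$ and an upper bound coming from $\|T_\psi\C\|\leq\|T_\psi\|\,\|\C\|$ --- but your execution contains a genuine error. You assert that $\rho(\C)=1$ whenever the Denjoy--Wolff point lies on $\partial\D$, citing \cite[Theorem 3.9]{Carlbook}. That theorem actually gives $\rho(\C)=\ph'(a)^{-1/2}$ when $a\in\partial\D$, which equals $1$ only in the case $\ph'(a)=1$; when $\ph'(a)<1$ we have $\rho(\C)>1$. (You have imported the interior-fixed-point value of the spectral radius into the boundary case.) This is not a harmless slip: the paper's own illustrative example is $\ph(z)=(z+1)/2$, for which $\ph'(1)=1/2$ and $\rho(\C)=\sqrt{2}$. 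For that example your chain $\|\W\|\leq\|\psi\|_\infty=|\psi(a)|=|\psi(a)|\rho(\C)$ contains an outright false equality, and the bound $\|\W\|\leq\|\psi\|_\infty$ is itself false, since $\|\W\|=\rho(\W)=|\psi(a)|\rho(\C)>|\psi(a)|$ there.

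The repair is immediate and recovers exactly the paper's proof: do not discard the factor $\|\C\|$. Normaloidness of $\C$ gives $\|\C\|=\rho(\C)$ (whatever that value is), so
\begin{align*}
\|\W\| &\leq \|T_\psi\|\,\|\C\| = \|\psi\|_\infty\,\|\C\| = |\psi(a)|\,\|\C\| = |\psi(a)|\,\rho(\C) = \rho(\W) \leq \|\W\|,
\end{align*}
forcing equality throughout. Everything else in your write-up (the role of each hypothesis, the remark about Proposition \ref{psi2}) is fine once this correction is made.
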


\begin{proof} Note that 
\begin{align*}
    \| \W \| &\leq \| T_{\psi} \| \| \C \| \\ &= \|\psi\|_{\infty} \| \C \| \\ &= |\psi(a)| \| \C \| \\ &= |\psi(a)|\rho(\C) \\ &= \rho(\W) \leq \| \W \|. 
\end{align*}

\end{proof}

\begin{example} If $\psi = e^{z}$ and $\ph = (z+1)/2$, then $\| \psi \|_{\infty} = e = \psi(1)$. Since \C\ is cohyponormal and therefore normaloid \cite[Theorem 8.7]{Carlbook}, \W\ is normaloid and also convexoid. \end{example}

While examples generated by Theorem \ref{bdy} are reasonable to come by when $\ph'(a) < 1$, they are actually impossible to come by when $\ph'(a) = 1$. 

\begin{theorem} Suppose \ph\ is an analytic self-map of \D\ with Denjoy-Wolff point $a \in \partial \D$ and $\ph'(a) = 1$. Then \C\ is not normaloid. \end{theorem}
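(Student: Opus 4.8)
The plan is to show $\rho(\C) = 1$ while $\| \C \| > 1$; the strict inequality $\rho(\C) < \| \C \|$ then says exactly that \C\ is not normaloid.

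The inequality $\| \C \| > 1$ is the easy half. Since $a \in \partial \D$ is the Denjoy--Wolff point of \ph, the map \ph\ is not an elliptic automorphism and has no fixed point inside \D; in particular $\ph(0) \neq 0$. By \cite[Corollary 3.7]{Carlbook} this forces $\| \C \| \geq \left(1 - |\ph(0)|^2\right)^{-1/2} > 1$.

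It remains to see $\rho(\C) = 1$. The bound $\rho(\C) \geq 1$ is automatic, since the constants are fixed by \C\ and hence $1 \in \sigma(\C)$. For the reverse inequality, let $\ph_{n}$ denote the $n$th iterate of \ph; it again has Denjoy--Wolff point $a$, with $\ph_{n}'(a) = \ph'(a)^n = 1$, and $\ph_{n}(0) \to a$ by the Denjoy--Wolff theorem. From $\| \C^n \| = \| C_{\ph_{n}} \| \leq \left( \frac{1 + |\ph_{n}(0)|}{1 - |\ph_{n}(0)|} \right)^{1/2}$ (again \cite[Corollary 3.7]{Carlbook}) we obtain $\rho(\C) = \lim_n \| \C^n \|^{1/n} \leq 1$ \emph{as soon as} $1 - |\ph_{n}(0)|$ does not tend to $0$ geometrically fast. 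This is the crux, and the only place the hypothesis $\ph'(a) = 1$ enters: in the parabolic regime the iterates approach the boundary fixed point subexponentially, so $\bigl(1-|\ph_{n}(0)|\bigr)^{-1/2n} \to 1$. Equivalently, one may quote the known spectral radius formula $\rho(\C) = \ph'(a)^{-1/2}$ for composition operators with boundary Denjoy--Wolff point, which here reads $\rho(\C) = 1$; I would cite the relevant iteration theory (Julia--Wolff--Carath\'eodory, Baker--Pommerenke) for this rather than reprove it. It really is essential, since a normaloid $T$ satisfies $\| T^n \| = \| T \|^n$ for every $n$ (because $\| T \|^n = \rho(T)^n = \rho(T^n) \leq \| T^n \| \leq \| T \|^n$), so normaloidity of \C\ would force $1 - |\ph_{n}(0)| \asymp \| \C \|^{-2n}$, i.e.\ geometrically fast convergence of the iterates to $a$ --- precisely what the parabolic case forbids. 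With $\rho(\C) = 1 < \| \C \|$ in hand, \C\ is not normaloid.
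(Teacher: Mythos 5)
Your proof is correct and, at its core, identical to the paper's: both establish $\rho(\C) = \ph'(a)^{-1/2} = 1$ by citing the known spectral radius formula for a boundary Denjoy--Wolff point, and both get $\|\C\| > 1$ from $\ph(0) \neq 0$ via the lower bound in \cite[Corollary 3.7]{Carlbook}. The additional discussion of subexponential convergence of the iterates is a pleasant sanity check but not needed once the spectral radius formula is quoted.
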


\begin{proof} The proof is analogous to Theorem \ref{ph0}. The spectral radius for \C\ is $\ph'(a)^{-1/2}$ when $a \in \partial \D$ \cite[Theorem 3.9]{Carlbook}, so here we have $\rho(\C) = 1$. Since $\ph(0) \neq 0$, we know $\| \C \| > 1$, therefore \C\ is not normaloid. \end{proof}

However, there \textit{are} known weighted composition operators where $\ph'(a) = 1$ and \W\ is normaloid - even self-adjoint \cite{cgk}. We make a minor adjustment to Theorem \ref{bdy} to generate new examples of normaloid weighted composition operators in this setting. 

\begin{cor} Suppose \ph\ is an analytic self-map of \D\ with Denjoy-Wolff point $a$, $\psi \in \Hi$ is continuous at $a$, and $f \in \Hi$ is also $f$ is continuous at $a$, with $\| f \|_{\infty} = |f(a)|$. If \W\ is normaloid and $\rho(W_{f\psi, \ph}) = |f(a)|\rho(\W)$, then $W_{f\psi, \ph}$ is normaloid. \end{cor}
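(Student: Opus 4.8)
The plan is to mimic the proof of Theorem \ref{bdy} almost verbatim, with $T_f$ playing the role that $T_\psi$ played there and $\W$ playing the role that $\C$ played. The key observation is that $W_{f\psi,\ph} = T_f W_{\psi,\ph}$, since multiplication operators compose on the weight: $(T_f \W)g = f \cdot \big(\psi \cdot (g\circ\ph)\big) = (f\psi)\cdot(g\circ\ph)$. So $W_{f\psi,\ph}$ factors as $T_f$ followed by $\W$, exactly parallel to $\W = T_\psi \C$.

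First I would record the submultiplicative norm bound $\| W_{f\psi,\ph}\| \le \|T_f\|\,\|\W\| = \|f\|_\infty \|\W\|$, using the stated fact that $\|T_f\| = \|f\|_\infty$ for $f \in \Hi$. Next, since $f$ is continuous at $a$, I would use $\|f\|_\infty = |f(a)|$ to rewrite this upper bound as $|f(a)|\,\|\W\|$. Then, because $\W$ is assumed normaloid, $\|\W\| = \rho(\W)$, so the upper bound becomes $|f(a)|\rho(\W)$. By the hypothesis $\rho(W_{f\psi,\ph}) = |f(a)|\rho(\W)$, this equals $\rho(W_{f\psi,\ph})$. Finally, $\rho(W_{f\psi,\ph}) \le \|W_{f\psi,\ph}\|$ always holds, so the chain of inequalities collapses to equality, giving $\|W_{f\psi,\ph}\| = \rho(W_{f\psi,\ph})$, i.e. $W_{f\psi,\ph}$ is normaloid. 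The whole argument is a single display of inequalities squeezed between $\|W_{f\psi,\ph}\|$ on both ends.

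I do not anticipate a genuine obstacle here — the content is entirely in the hypotheses, which are arranged precisely so the squeeze works. The one point to be careful about is whether $W_{f\psi,\ph}$ is bounded and whether $f\psi \in \Hi$, but both $f$ and $\psi$ are in $\Hi$, which is an algebra, so $f\psi \in \Hi$ and $T_{f\psi}$ (hence $W_{f\psi,\ph} = T_{f\psi}\C$) is bounded; this matches the paper's standing assumption that the weight lies in $\Hi$. The only other thing worth a word is that continuity of $f$ at $a$ together with $\|f\|_\infty = |f(a)|$ is needed just to identify the operator-norm bound $\|f\|_\infty$ with the quantity $|f(a)|$ appearing in the spectral-radius hypothesis; no maximum-modulus subtlety is actually invoked. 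So the proof is three lines of algebra on norms and spectral radii, structurally identical to Theorem \ref{bdy}.

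\begin{proof} Since $f, \psi \in \Hi$ and $\Hi$ is an algebra, $f\psi \in \Hi$, and $W_{f\psi,\ph} = T_{f\psi}\C = T_f\W$ is bounded. Then
\begin{align*}
\| W_{f\psi,\ph} \| &\leq \| T_f \| \| \W \| \\ &= \| f \|_{\infty} \| \W \| \\ &= |f(a)| \| \W \| \\ &= |f(a)| \rho(\W) \\ &= \rho(W_{f\psi,\ph}) \leq \| W_{f\psi,\ph} \|,
\end{align*}
where the second equality uses $\|T_f\| = \|f\|_\infty$, the third uses $\|f\|_\infty = |f(a)|$, the fourth uses that \W\ is normaloid, and the fifth is the hypothesis $\rho(W_{f\psi,\ph}) = |f(a)|\rho(\W)$. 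Hence $\| W_{f\psi,\ph} \| = \rho(W_{f\psi,\ph})$, so $W_{f\psi,\ph}$ is normaloid. \end{proof}
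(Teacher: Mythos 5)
Your proof is correct and is essentially identical to the paper's, which likewise runs the squeeze $\| W_{f\psi,\ph}\| \le \|T_f\|\|\W\| = |f(a)|\rho(\W) = \rho(W_{f\psi,\ph}) \le \|W_{f\psi,\ph}\|$ as a symbol-for-symbol adaptation of Theorem \ref{bdy}. Your added remarks on the factorization $W_{f\psi,\ph}=T_f\W$ and on $f\psi\in\Hi$ are harmless elaborations of what the paper leaves implicit.
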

\begin{proof} 

The proof is identical to Theorem \ref{bdy}, with a mere adjustment of symbols:
\begin{align*}
    \| W_{f\psi, \varphi} \| &\leq \| T_{f} \| \| \W \|\\ &= \| f \|_{\infty} \| \W \| \\ &= |f(a)| \| \W \| \\ &= |f(a)|\rho(\W) \\ &= \rho(W_{f\psi, \varphi}) \\ &\leq \| W_{f\psi, \varphi} \|. 
\end{align*}

\end{proof}

\begin{example} Suppose $\psi(z) = \frac{1}{2-z}, f(z) = e^{z}$. Then $W_{\psi, \psi}$ is self-adjoint and therefore normaloid by \cite[Theorem 6]{cgk}. Since $\psi$ is UCI \cite[Example 5]{derek},we have $\rho(W_{f\psi, \psi}) = |f(1)||\psi(1)| = |f(1)|\rho(W_{\psi, \psi})$. Since $|f(1)| = e = \| f \|_{\infty}$, we have that $W_{f\psi, \psi}$ is normaloid. \end{example}

We end this section with a few extra facts for when \ph\ is UCI and $\ph'(a) < 1$. 

\begin{theorem}\label{specvex} Suppose \ph\ is UCI, the Denjoy-Wolff point $a$ of \ph\ is on $\partial \D$, and $\ph'(a)<1$. Then \W\ is convexoid if and only if \W\ is spectraloid. \end{theorem}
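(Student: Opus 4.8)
The plan is to recall what Theorem~\ref{uci} gives us under the hypotheses ``\ph\ UCI, $a \in \partial\D$, $\ph'(a) < 1$'': namely $\sigma(\W) = \psi(a)\sigma(\C)$. The spectrum of \C\ in the linear-fractional-type setting with boundary Denjoy-Wolff point and $\ph'(a) = s < 1$ is a closed disk centered at the origin (for linear fractional self-maps this is $\{\lambda : |\lambda| \le s^{-1/2}\}$, and more generally the relevant spectrum is rotationally symmetric about $0$); in any case $\sigma(\C)$ is a disk centered at $0$, so $\sigma(\W) = \psi(a)\sigma(\C)$ is a disk centered at $0$ of radius $|\psi(a)|\rho(\C) = \rho(\W)$. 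The key structural observation is therefore that $\sigma(\W)$ is \emph{convex and circularly symmetric about the origin}, hence equals its own convex hull, and that disk has radius exactly $\rho(\W)$.

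Given that, the forward implication (convexoid $\Rightarrow$ spectraloid) is free, since it holds for every bounded operator: if $\overline{W(T)} = \mathrm{conv}(\sigma(T))$ then $r(T) = \sup\{|\lambda| : \lambda \in \overline{W(T)}\} = \sup\{|\lambda| : \lambda \in \mathrm{conv}(\sigma(T))\} = \rho(T)$. For the reverse implication, suppose \W\ is spectraloid, so $r(\W) = \rho(\W)$. We always have $\sigma(\W) \subseteq \overline{W(\W)}$, and $\overline{W(\W)}$ is convex, so $\mathrm{conv}(\sigma(\W)) \subseteq \overline{W(\W)}$. Conversely, $\overline{W(\W)}$ is contained in the closed disk of radius $r(\W) = \rho(\W)$ centered at the origin; but by the computation above that closed disk \emph{is} $\mathrm{conv}(\sigma(\W))$. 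Hence $\overline{W(\W)} \subseteq \mathrm{conv}(\sigma(\W))$, giving equality, so \W\ is convexoid.

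The steps in order: (1) cite Theorem~\ref{uci}(3) to get $\sigma(\W) = \psi(a)\sigma(\C)$; (2) invoke the description of $\sigma(\C)$ for UCI $\ph$ with $\ph'(a) < 1$ to see it is a disk centered at $0$ of radius $\rho(\C)$, hence $\sigma(\W)$ is a disk centered at $0$ of radius $\rho(\W)$, which is already convex and satisfies $\mathrm{conv}(\sigma(\W)) = \{\lambda : |\lambda| \le \rho(\W)\}$; (3) forward direction is the general implication convexoid $\Rightarrow$ spectraloid; (4) reverse direction: $\sigma(\W) \subseteq \overline{W(\W)}$ plus convexity gives one inclusion, and spectraloidness plus step (2) gives $\overline{W(\W)} \subseteq \{|\lambda| \le \rho(\W)\} = \mathrm{conv}(\sigma(\W))$ for the other.

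The main obstacle is step (2): pinning down that $\sigma(\C)$ really is a full disk centered at the origin (not merely something with $0$ in it) under the stated hypotheses. For linear-fractional \ph\ with boundary fixed point and $\ph'(a) < 1$ this is classical \cite[Theorem 7.4 and surrounding discussion]{Carlbook}, and the UCI hypothesis with $\ph'(a)<1$ is exactly the regime where those spectral computations (or their approximation-by-linear-fractional analogues used in \cite{derek}) apply; I would lean on whatever spectral description of \C\ is already available in \cite{derek} under UCI, since that is the source cited for Theorem~\ref{uci}. If for some reason $\sigma(\C)$ is only known to be rotationally symmetric about $0$ rather than a literal disk, circular symmetry together with closedness of $\mathrm{conv}(\sigma(\C))$ still forces $\mathrm{conv}(\sigma(\C))$ to be the closed disk of radius $\rho(\C)$, which is all the argument needs.
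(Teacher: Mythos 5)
Your proposal is correct and follows essentially the same route as the paper: both directions rest on the general implication convexoid $\Rightarrow$ spectraloid together with the fact (via Theorem~\ref{uci}(3) and the known spectral picture for such \C) that $\sigma(\W)$ is the full closed disk of radius $\rho(\W)$ centered at the origin, so that spectraloidness pins $\overline{W(\W)}$ between $\mathrm{conv}(\sigma(\W))$ and the disk of radius $r(\W)=\rho(\W)$. You merely spell out the two inclusions more explicitly than the paper does, and your closing remark that rotational symmetry of $\sigma(\C)$ about $0$ would already suffice is a fair observation but not a departure from the argument.
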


\begin{proof} Every convexoid operator is spectraloid. In the other direction, assume \W\ is spectraloid, so that $\rho(\W) = r(\W)$. Note that by $(3)$ of Theorem \ref{uci}, the spectrum of \W\ is a closed disk centered at the origin, completely filling in the set $\{ \lambda \in \mathbb{C} : |\lambda| \leq \rho(\W) \}$. Since $\rho(\W) = r(\W)$, this set is necessarily also the closure of the numerical range. Therefore, \W\ is convexoid.  \end{proof}

\begin{cor} Suppose \ph\ is UCI, the Denjoy-Wolff point $a$ of \ph\ is on $\partial \D$, and $\ph'(a)<1$. If \W\ is normaloid, then \W\ is convexoid. \end{cor}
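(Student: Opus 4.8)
The plan is to chain two facts that are already available. First, from the list of well-known implications in Section \ref{intro}, every normaloid operator is spectraloid; so if \W\ is normaloid, then $\rho(\W) = r(\W)$. Second, the hypotheses of this corollary --- \ph\ UCI, Denjoy-Wolff point $a \in \partial\D$, and $\ph'(a) < 1$ --- are precisely the hypotheses of Theorem \ref{specvex}, which states that under exactly these conditions \W\ is convexoid if and only if \W\ is spectraloid. Combining the two yields the corollary in one line: normaloid $\Rightarrow$ spectraloid $\Rightarrow$ convexoid.

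Concretely, I would write: assume \W\ is normaloid. Then \W\ is spectraloid by the standard implication recalled in Section \ref{intro}. Since \ph\ is UCI with Denjoy-Wolff point on $\partial\D$ and $\ph'(a) < 1$, Theorem \ref{specvex} applies and gives that \W\ is convexoid. For a reader who wants the mechanism spelled out, one can inline the short argument of Theorem \ref{specvex}: by part (3) of Theorem \ref{uci}, $\sigma(\W) = \psi(a)\sigma(\C)$ is a closed disk centered at the origin, filling the set $\{\lambda : |\lambda| \leq \rho(\W)\}$; since normaloidness forces $r(\W) = \rho(\W)$, the closure of $W(\W)$ --- which always contains the convex hull of $\sigma(\W)$ (already the full disk of radius $\rho(\W)$) and is always contained in the disk of radius $r(\W)$ --- coincides with that disk, so \W\ is convexoid.

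The only point that requires any care is confirming that Theorem \ref{specvex} genuinely applies, i.e., that $\sigma(\C)$ is the full closed disk of radius $\rho(\C)$ under the stated hypotheses; this is where UCI together with $\ph'(a) < 1$ enters, via Theorem \ref{uci}(3). There is no substantive obstacle here --- this statement is a direct corollary of Theorem \ref{specvex} and the implication ``normaloid $\Rightarrow$ spectraloid,'' so the ``proof'' is essentially a citation of the preceding theorem.
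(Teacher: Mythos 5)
Your proposal is correct and matches the paper's proof exactly: the paper also cites the implication normaloid $\Rightarrow$ spectraloid from Section \ref{intro} and then applies Theorem \ref{specvex} to conclude convexoidity. The optional inlining of Theorem \ref{specvex}'s mechanism is fine but not needed.
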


\begin{proof} Every normaloid operator is spectraloid, so \W\ is spectraloid. By Theorem \ref{specvex}, if \W\ is spectraloid, it is also convexoid. \end{proof}

\section{Further Questions}

Here we summarize the questions raised by the work of this paper. 

\begin{enumerate}
    \item If \W\ is normaloid and $\rho(\W) = |\psi(a)| \rho(\C)$, is it necessary that $|\psi(a)| = \|\psi\|_{\infty}$?
    \item Can the many hypotheses of Theorem \ref{essspec} be weakened, to identify $\rho_{e}$ in the general setting when \ph\ has interior Denjoy-Wolff point and \C\ is not power-compact?
    \item Can we then characterize all normaloid weighted composition operators where \ph\ has Denjoy-Wolff point in \D? 
    \item What are the necessary conditions for \W\ to be normaloid when the Denjoy-Wolff point of \ph\ is on $\partial \D$? 
    \item Ultimately, can we get an exact characterization of when \W\ is normaloid? 
\end{enumerate}

\section*{Acknowledgements}

The author would like to thank the Taylor University Distinguished Lecturer program for funding this research, and the reviewer for his extremely helpful comments. 

\footnotesize

\bibliographystyle{amsplain}

\end{document}